 %%%%%%%%%%%%%%%%%%%%%%%%%%%%%%%%%%%%%%%%%%%%%%%%%%%%%%%%%%%%%%%%%%%%%%%%%%%%%%%%%%%%%%%%%%%%%%%%%%%%%%%
%													%
%	A bijection for Euler's partition theorem in the spirit of Bressoud			%
%													%
 %%%%%%%%%%%%%%%%%%%%%%%%%%%%%%%%%%%%%%%%%%%%%%%%%%%%%%%%%%%%%%%%%%%%%%%%%%%%%%%%%%%%%%%%%%%%%%%%%%%%%%%
\documentclass{amsart}
\usepackage{amsmath, ytableau, xcolor}

\newcommand{\Mod}[1]{\ (\mathrm{mod}\ #1)}
\pagestyle{plain}

\newtheorem{Lemma}             {Lemma}
\newtheorem{Corollary}  [Lemma]{Corollary}
\newtheorem{Proposition}[Lemma]{Proposition}
\newtheorem{Theorem}    [Lemma]{Theorem}

\newcommand{\op}{\operatorname}
\newcommand{\sym}{\mathfrak S}
\newcommand{\alt}{\mathfrak A}

\newcommand{\interchange}{\searrow\hspace{-1em}\swarrow}

%Red Green Blue
\definecolor{lightred}{HTML}{CC6050}
\definecolor{darkred}{HTML}{981030}
\definecolor{mediumgreen}{HTML}{DDCC77}
\definecolor{lightblue}{HTML}{80CCEE}
\definecolor{darkblue}{HTML}{234280}

\definecolor{carmine}{HTML}{660000}

\title{A bijection for Euler's partition theorem in the spirit of Bressoud}
\author{John Murray}
\address{Department of Mathematics \& Statistics, Maynooth University, Co. Kildare, Ireland}
\email{John.Murray@mu.ie}
\date{March 29, 2018}
\begin{document}
\begin{abstract}
For each positive integer $n$, we construct a bijection between the odd partitions and the distinct partitions of $n$ which extends Bressoud's bijection between the odd-and-distinct partitions of $n$ and the splitting partitions of $n$.

We compare our bijection with the classical bijections of Glaisher and Sylvester, and also with a recent bijection due to Chen, Gao, Ji and Li.
\end{abstract}
\maketitle

\section{Introduction}

Euler's Theorem is that the number of odd partitions of a positive integer $n$ equals the number of distinct partitions of $n$. Many bijections have been constructed which realise this equality. The most well-known ones are due to Glaisher, Sylvester and Dyson (3.2, 3.4 and 3.5 in \cite{P}, respectively). Applications of Dyson's bijection have been explored in \cite{CJ}.

In this note we construct another bijection betwen the odd partitions of $n$ and the distinct partitions of $n$. Suppose that the odd partition $\lambda$ corresponds to the strict partition $\mu$ under our bijection. Then the number of parts of $\lambda$ equals the alternating sum of $\mu$ (as defined in \eqref{E:alt} below) and the number of parts occuring with odd multiplicity in $\lambda$ equals the number of odd parts in $\mu$. A different bijection with these properties was first constructed in \cite{CGJL}.

Our bijection extends a bijection of Bressoud \cite{Bd} between the odd and distinct partitions of $n$ and the splitting partitions of $n$ (as defined in \eqref{E:split} below).

\section{Preliminaries}

\subsection{Partitions}\label{SS:notation} A partition of $n\geq0$ is a non-increasing sequence $\lambda=(\lambda_1,\dots,\lambda_\ell)$ of positive integers whose sum is $n$. We express this by writing $\lambda\vdash n$. So $\ell=\ell(\lambda)$ is the number of parts in $\lambda$ and $n=|\lambda|$ is the sum of the parts of $\lambda$. As is customary we set $\lambda_j:=0$, for all $j>\ell$. The empty partition $()$ is the unique partition of $0$. The sets of odd and distinct partitions of $n$ are, respectively
$$
\begin{aligned}
&{{\mathcal O}(n):=\{\lambda\vdash n\mid\mbox{$\lambda_j$ is odd, for $j=1,\dots\ell$}\}},\\
&{{\mathcal D}(n):=\{\lambda\vdash n\mid\lambda_1>\lambda_2>\dots>\lambda_\ell\}}. 
\end{aligned}
$$

Define $\lambda_i':=\#\{j>0\mid\lambda_j\geq i\}$ for $i=1,\dots,\lambda_1$. Then $\lambda':=(\lambda_1',\lambda_2',\dots)$ is a partition of $n$ called the conjugate or transpose of $\lambda$. Notice that $\lambda_1'=\ell(\lambda)$, $\ell(\lambda')=\lambda_1$ and $(\lambda')'=\lambda$. The multiplicity of $i$ as a part of $\lambda$ is then $m_i(\lambda):=\lambda_i'-\lambda_{i+1}'$, for $i\geq1$. We set $n_o(\lambda):=\#\{i>0\mid\mbox{ $m_i(\lambda)$ is odd}\}$ as the number of parts which occur with odd multiplicity in $\lambda$. We also use the multiset notation for partitions. For example $(5,5,5,4,3,3,1,1,1,1)$ can be represented by $(5^3,4,3^2,1^4)$.

The {\em alternating sum} of $\lambda$ is the non-negative integer
\begin{equation}\label{E:alt}
|\lambda|_{a}:=\sum_{j=1}^\ell(-1)^{j-1}\lambda_j,\quad\mbox{notation as in \cite{KY}.} 
\end{equation}
We use $\ell_o(\lambda)$ to denote the number of odd parts in $\lambda$. So $\ell_o(\lambda)=|\lambda'|_a$. 

Given partitions $\alpha,\beta$, we use $\alpha\circ\beta$ to denote the partition of $|\alpha|+|\beta|$ obtained by arranging the disjoint union of the parts of $\alpha$ and $\beta$ in non-increasing order. We use $\alpha^2$ to denote $\alpha\circ\alpha=(\alpha_1,\alpha_1,\alpha_2,\alpha_2,\dots)$. %For $n>0$, we let $n\alpha$ be the partition of $n|\alpha|$ whose $j$-th part is $n\alpha_j$, for $j=1,\dots,\ell(\alpha)$.

We define a {\em run} in $\lambda$ to be a maximal sequence $2m-1,2m-3,\dots,2m-2k+1$ of consecutive odd integers, each of which occurs with odd multiplicity in $\lambda$. In particular $2m+1$ and $2m-2k-1$ each occur with even multiplicity in $\lambda$. We say that the run {\em odd} if $k$ is an odd integer and $k<m$. So $1$ is not a term in an odd run. We use $\ell_r(\lambda)$ to denote the number of odd runs in $\lambda$.

Example: $\lambda=(23,21,19^2,17,15^5,13,9^3,5,3,1)$ has 4 runs $(23,21)$, $(17,15,13)$, $(9)$ and $(5,3,1)$. Of these, the first and last are not odd. So $\ell_r(\lambda)=2$.

\subsection{Diagrams} Recall the notion of the Young diagram $[\lambda]$ of a partition $\lambda$, as for example described in \cite[2.1]{P}. Each part $k$ of $\lambda$ is represented by a row of $k$ boxes in $[\lambda]$. Row are justified to the left, and larger rows are drawn above smaller rows.

Now $\lambda$ can also be represented by its 2-modular diagram $[\lambda]_2$. In $[\lambda]_2$ each part $2k-1$ or $2k$ of $\lambda$ is represented by a row of $k$ boxes. The number $1$ or $2$ is written into the last box of the row, as the part is odd or even, and the number 2 is written into all other boxes in the row.

We will draw  other diagrams for partitions to illustrate various bijections.

\subsection{Spin regular and splitting partitions}\label{SS:double} A composition of $n$ is a sequence of positive integers whose sum is $n$. Following \cite{Bn} {\em the double} of $\mu\vdash n$ is obtained by replacing each odd part $\mu_i$ by the pair $(\frac{\mu_i+1}{2},\frac{\mu_i-1}{2})$ and each even part $\mu_i$ by the pair $(\frac{\mu_i}{2}+1,\frac{\mu_i}{2}-1)$. So $\mu^D_{2j-1}-\mu^D_{2j}=1$ or $2$ as $\mu_j$ is odd or even, respectively.

D. Benson calls $\mu$ {\em spin regular} if $\mu^D$ is a partition which has distinct parts. It is easy to see that the set of spin regular partitions of $n$ is
$$
\mathcal{SR}(n):=\left\{\mu\in{\mathcal D}(n)\left|\,
		  \begin{aligned}
                   &\mu_{i-1}-\mu_i\geq4\qquad\mbox{ and }\\
                   &\mu_{i-1}-\mu_i\geq6,\mbox{ if $\mu_{i-1},\mu_i$ even,}
                  \end{aligned}\right.
                  \mbox{for $i=2,\dots\ell(\mu)$.}
                   \right\}.
$$

The doubles of the spin regular partitions of $n$ constitute the set
$$
{\mathcal D}_{\leq2}(n):=\{\nu\in{\mathcal D}(n)\mid\nu_{2j-1}-\nu_{2j}=\mbox{$1$ or $2$, for $0<2j\leq\ell(\nu)+1$}\}.
$$
So doubling defines a bijection $\mathcal{SR}(n)\xrightarrow{\cong}{\mathcal D}_{\leq2}(n)$.

The set of splitting partitions of $n$ is
\begin{equation}\label{E:split}
{\mathcal S}(n):=\left\{\nu\vdash n\,\,\left|\,
			    \begin{aligned}
                             &\nu_{2j-1}-\nu_{2j}=1\mbox{ or }2,\\
                             &\nu_{2j-1}+\nu_{2j}\not\equiv2\Mod4,
                            \end{aligned}\right.
			    \quad\mbox{for $0<2j\leq\ell(\nu)+1$}
                 \right\}
\end{equation}
In particular ${\mathcal S}(n)\subseteq{\mathcal D}_{\leq2}(n)$. It is shown in \cite{Bn} that ${\mathcal S}(n)$ indexes the $2$-modular irreducible represeentations of the symmetric group $\sym_n$ which split on restriction to the alternating group $\alt_n$.

\subsection{Bijections of Glaisher and Sylvester} For comparison purposes we briefly describe the classical bijections ${\mathcal O}(n)\xrightarrow{\cong}{\mathcal D}(n)$ of Glaisher and Sylvester. %

Let $\lambda\in{\mathcal O}(n)$ correspond to $\nu\in{\mathcal D}(n)$ under Glaisher's bijection. Then for each $i\geq0$ and $j>0$, the part $2^ij$ occurs in $\nu$ if and only if $2^i$ occurs in the binary expansion of $m_j(\lambda)$. For example $\lambda=(7^{13},5^3,3^2,1^5)$ corresponds to $\nu=(56,28,10,7,6,5,4,1)$. We note that $\ell_o(\nu)=\op{n_o}(\lambda)$ but generally $|\nu|_a\ne\ell(\lambda)$ for Glaisher's bijection.

For Sylvester's bijection, again let $\lambda\in{\mathcal O}(n)$ and set $d:=\op{max}\{j\mid\lambda_j\geq 2j-1\}$. So $\lambda_d\geq 2d-1$ and $\lambda_{d+1}<2d+1$. If $\lambda_d>2d-1$ then $\lambda_{2d+1}'=d$ and we set $\epsilon=0$. If instead $\lambda_d=2d-1$ then $\lambda_{2d-1}'=d$ and we set $\epsilon=1$. Then the image $\mu\in{\mathcal D}(n)$ of $\lambda$ under Sylvester's bijection has $2d-\epsilon$ parts defined by
$$
\begin{array}{ll}
\mu_{2j-1}&=\frac{\lambda_j+1}{2}+\lambda_{2j-1}'-2j+1,\quad j=1,\dots,d\\
\mu_{2j}  &=\frac{\lambda_j-1}{2}+\lambda_{2j+1}'-2j+1,\quad j=1,\dots,d-\epsilon.
\end{array}
$$

\subsection{Graphical description of Sylvester's bijection} This description is essentially the same as that of \cite[Figure 12]{P}. Draw a Young diagram of $\lambda\in{\mathcal O}(n)$, but with the rows centered rather than justified to the left. Now split the diagram with a vertical line so that in each row there is one more box to the left than to the right (so $\lambda^D$ is the composition we get from reading the left and right parts of the diagram from top to bottom). Decompose the left and right sides into {\em hooks}, as indicated in Figure \ref{f:sylvester} below. The hooks alternate in length from left and right, forming a strict sequence. This is the Sylvester correspondent $\mu$ of $\lambda$.

Notice that there are $\ell(\lambda)$ more boxes on the left than on the right. It follows from this that $|\mu|_{a}=\ell(\lambda)$, as first noted in \cite[2.2(iii)]{Bt}. On the other hand for Sylvester's bijection $\ell_o(\mu)\ne\op{n_o}(\lambda)$ generally.

Example: Sylvester map the odd partition $\lambda=(13^2,11,5^2,3,1^2)$ to the distinct partition $\mu=(14,11,10,8,6,3)$, as indicated in Figure \ref{f:sylvester}.

\begin{figure}[!ht]
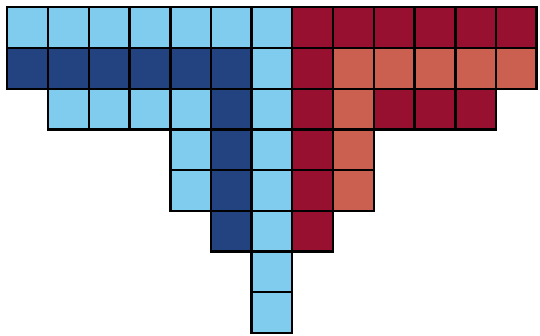

\caption{Sylvester's bijection for $(13^2,11,5^2,3,1^2)$}
\label{f:sylvester}
$$
\begin{ytableau}
*(lightblue) &*(lightblue) &*(lightblue) &*(lightblue) &*(lightblue) &*(lightblue) &*(lightblue) &*(darkred) &*(darkred) &*(darkred) &*(darkred) &*(darkred) &*(darkred)\\
*(darkblue) &*(darkblue) &*(darkblue) &*(darkblue) &*(darkblue) &*(darkblue) &*(lightblue) &*(darkred) &*(lightred) &*(lightred) &*(lightred) &*(lightred) &*(lightred)\\
\none &*(lightblue) &*(lightblue) &*(lightblue) &*(lightblue) &*(darkblue) &*(lightblue) &*(darkred) &*(lightred) &*(darkred) &*(darkred) &*(darkred)\\
\none &\none &\none &\none &*(lightblue) &*(darkblue) &*(lightblue) &*(darkred) &*(lightred)\\
\none &\none &\none &\none &*(lightblue) &*(darkblue) &*(lightblue) &*(darkred) &*(lightred)\\
\none &\none &\none &\none &\none &*(darkblue) &*(lightblue) &*(darkred)\\
\none &\none &\none &\none &\none &\none &*(lightblue)\\
\none &\none &\none &\none &\none &\none &*(lightblue)\\
\end{ytableau}
$$
\end{figure}

We note that Sylvester's bijection can also be described in terms of the 2-modular diagram $[\lambda]_2$ of $\lambda$. See \cite{Bd}.% Notice that if the numbers are erased in $[\lambda]_2$, the resulting diagram is the Young diagram of the partition $(\frac{\lambda_1+1}{2},\frac{\lambda_2+1}{2},\dots,\frac{\lambda_\ell+1}{2})$ of $\frac{n+\ell(\lambda)}{2}$.

\subsection{The inverse of Sylvester's bijection}\label{SS:Sylvester} We give a graphical construction for the inverse of Sylvester's map, which the reader can compare to Figure \ref{f:imy2-modular} below. This is only a slight modification of the method described by \cite[Figure 1]{KY}. Draw a diagram for $\mu\in{\mathcal D}(n)$ as follows. First arrange $\mu_1$ boxes horizontally to form the first row. The second row consisting of $\mu_2$ boxes is put below the first row, aligning the rightmost box in the second row with the {\em second to right} box in the row above. The third row consisting of $\mu_3$ boxes is put below the second row, aligning the leftmost box in the third row with the leftmost box in the row above. Repeat this process, alternating between left and right.

\begin{figure}[h]
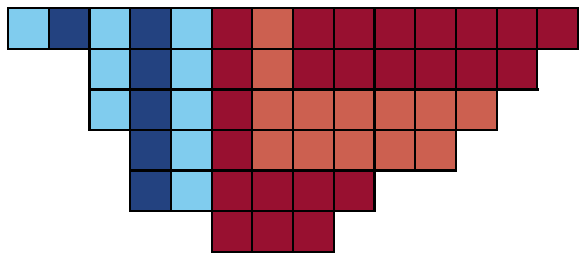

\caption{Inverse of Sylvester's bijection for $\mu=(14,11,10,8,6,3)$}
\label{f:isylvester}
$$
\begin{ytableau}
*(lightblue) &*(darkblue) &*(lightblue) &*(darkblue) &*(lightblue)&*(darkred) &*(lightred) &*(darkred) &*(darkred) &*(darkred) &*(darkred) &*(darkred) &*(darkred) &*(darkred) \\
\none &\none &*(lightblue) &*(darkblue) &*(lightblue) &*(darkred) &*(lightred) &*(darkred) &*(darkred) &*(darkred) &*(darkred) &*(darkred) &*(darkred)\\
\none &\none &*(lightblue) &*(darkblue) &*(lightblue) &*(darkred) &*(lightred) &*(lightred) &*(lightred) &*(lightred) &*(lightred) &*(lightred)\\
\none &\none &\none &*(darkblue) &*(lightblue) &*(darkred) &*(lightred) &*(lightred) &*(lightred) &*(lightred) &*(lightred)\\
\none &\none &\none &*(darkblue) &*(lightblue) &*(darkred) &*(darkred) &*(darkred) &*(darkred)\\
\none &\none &\none &\none &\none &*(darkred) &*(darkred) &*(darkred)\\
\end{ytableau}
$$
\end{figure}

Notice that all odd-length columns in the diagram are to the left. Their lengths encode the parts $1^2,3,5^2$ of $\lambda$. Removing these $5$ columns leaves the Young diagram of $(9,8,7,6,4,3)$. The boxes in the union of the first column and last two rows represent the part $11$ of $\lambda$. Removing these leaves the diagram of $(8,7,6,5)$. In this diagram the boxes in the union of the first column and last two rows represent the part $13$ of $\lambda$. Removing these leaves the diagram of $(7,6)$, which represents another part $13$ in $\lambda$. So our procedure has recovered all parts of $\lambda$.

\subsection{Bressoud's Bijection}\label{SS:Bressoud} In order to construct our bijection, we need a special case of a partition bijection discovered by D. Bressoud \cite{Bd}. He used it to give a combinatorial proof of a partition theorem of I. Schur \cite{S}. Schur's result states that if $r$ and $m$ are positive integers with $2r<m$, then the number of partitions of $n$ into distinct parts congruent to $\pm r\Mod m$ equals the number of partitions of $n$ into distinct parts congruent to $0,\pm r\Mod m$ with minimal difference $m$ and minimal difference $2m$ between multiples of $m$.

Taking the case $r=1$ and $m=4$ in Schur's Theorem, and forming the doubles of partitions, we see that the number of odd-and-distinct partitions of $n$ equals the number of splitting partitions of $n$. The following is implicit in \cite{Bd}:

\begin{Proposition}\label{P:Bressoud}
There is a bijection ${\mathcal O}(n)\cap{\mathcal D}(n)\xrightarrow{\cong}{\mathcal S}(n)$, $\lambda\mapsto\mu$ such that
$$
|\mu|_{a}=\ell_o(\mu)=\ell(\lambda)\quad\mbox{and}\quad\ell(\mu)=\ell(\lambda)+\ell_r(\lambda).
$$
\end{Proposition}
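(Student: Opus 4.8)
The plan is to build the bijection in two steps: first Bressoud's bijection \cite{Bd} in the special case $(r,m)=(1,4)$ of Schur's theorem, and then the doubling map of Section~\ref{SS:double}. Write $\mathcal T(n)$ for the set of partitions of $n$ into distinct parts with no part $\equiv2\Mod4$, consecutive parts differing by at least $4$, and consecutive parts divisible by $4$ differing by at least $8$; these are the partitions counted on the second side of Schur's theorem for $(r,m)=(1,4)$, so Schur's theorem asserts $|\mathcal O(n)\cap\mathcal D(n)|=|\mathcal T(n)|$, and a short check shows $\mathcal T(n)=\{\sigma\in\mathcal{SR}(n)\mid\sigma_i\equiv0\Mod4\text{ for every even }\sigma_i\}$. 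Since an even part $\sigma_i$ doubles to the pair $(\tfrac{\sigma_i}{2}+1,\tfrac{\sigma_i}{2}-1)$, whose sum $\sigma_i$ avoids $2\Mod4$ exactly when $4\mid\sigma_i$, while an odd part doubles to a pair of odd sum, the doubling bijection $\mathcal{SR}(n)\xrightarrow{\cong}\mathcal D_{\le2}(n)$ restricts to a bijection $\mathcal T(n)\xrightarrow{\cong}\mathcal S(n)$. Bressoud's construction furnishes a bijection $\beta\colon\mathcal O(n)\cap\mathcal D(n)\xrightarrow{\cong}\mathcal T(n)$; putting $\sigma:=\beta(\lambda)$ and $\mu:=\sigma^D$, the composite $\lambda\mapsto\mu$ is a bijection $\mathcal O(n)\cap\mathcal D(n)\xrightarrow{\cong}\mathcal S(n)$.

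Next I would settle the statistics that are purely formal consequences of doubling. Let $e(\sigma)$ and $o(\sigma)$ denote the numbers of even and odd parts of $\sigma\in\mathcal T(n)$, and write $[1\in\sigma]$ for $1$ or $0$ according as $1$ is or is not a part of $\sigma$. Each part of $\sigma$ contributes one pair to $\mu$, of difference $1$ for an odd part and $2$ for an even part, so telescoping the alternating sum gives $|\mu|_a=o(\sigma)+2e(\sigma)$. An odd part $\sigma_i$ contributes the consecutive pair $(\tfrac{\sigma_i+1}{2},\tfrac{\sigma_i-1}{2})$, exactly one entry of which is odd, whereas an even part ($\equiv0\Mod4$) contributes two odd entries; hence $\ell_o(\mu)=o(\sigma)+2e(\sigma)$ as well. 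As $o(\sigma)+2e(\sigma)=\ell(\sigma)+e(\sigma)$, the first two quantities in the Proposition coincide, and both equal $\ell(\sigma)+e(\sigma)$. Finally $\ell(\mu)=2\ell(\sigma)-[1\in\sigma]$, since every part of $\sigma$ splits into two parts of $\mu$ except a part equal to $1$, which yields the single part $1$. Thus the Proposition is equivalent to the two assertions
\begin{equation*}
\ell(\sigma)+e(\sigma)=\ell(\lambda)\qquad\text{and}\qquad o(\sigma)-[1\in\sigma]=\ell_r(\lambda),\qquad\sigma=\beta(\lambda).
\end{equation*}

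Both of these I would read off from Bressoud's procedure, which acts essentially run by run. Decompose $\lambda$ into its runs; a run of length $k$ pairs off $\lfloor k/2\rfloor$ couples of consecutive odd parts (each couple has sum a multiple of $4$) and, when $k$ is odd, leaves one surviving odd part, after which parts that now differ by less than $4$ (or that have become $\equiv2\Mod4$) are pushed apart by Bressoud's ``carrying'' step, which alters values but neither the number of parts nor their residues modulo $4$. Consequently $\ell(\sigma)=\sum_{\text{runs}}\lceil k/2\rceil$ and $e(\sigma)=\sum_{\text{runs}}\lfloor k/2\rfloor$, so $\ell(\sigma)+e(\sigma)=\sum k=\ell(\lambda)$; this is the first identity, and it is equivalent to the refinement $\ell(\lambda)=\ell(\sigma)+\#\{i\mid4\mid\sigma_i\}$ of Schur's theorem. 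For the second, $o(\sigma)$ equals the number of odd-length runs, the surviving odd part of an odd-length run being $1$ exactly when that run contains $1$; since $\ell_r(\lambda)$ counts precisely the odd-length runs that do not contain $1$, we obtain $o(\sigma)-[1\in\sigma]=\ell_r(\lambda)$.

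The delicate point --- and the main obstacle --- is that Bressoud's bijection is \emph{not} literally ``pair off within each run'': after the naive pairing, two parts coming from different runs can differ by less than $4$, and repairing a block can momentarily create a part $\equiv2\Mod4$. One therefore has to work with the precise algorithm of \cite{Bd} (most conveniently, I expect, through its description on $2$-modular diagrams, in the style used elsewhere in this paper) and verify two invariants of its rebalancing step: that it preserves the multiset of residues modulo $4$ among the parts --- so that $\ell(\sigma)$, $e(\sigma)$, $o(\sigma)$ and $[1\in\sigma]$ are the values predicted by the crude pairing --- and that the final output indeed lies in $\mathcal T(n)$. Granting these invariants, the two displayed identities, and with them the Proposition, follow.
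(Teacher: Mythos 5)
Your formal reductions are all correct: the identification of the Schur-side set ${\mathcal T}(n)$ inside $\mathcal{SR}(n)$, the fact that doubling carries ${\mathcal T}(n)$ onto ${\mathcal S}(n)$, the computations $|\mu|_a=\ell_o(\mu)=o(\sigma)+2e(\sigma)$ and $\ell(\mu)=2\ell(\sigma)-[1\in\sigma]$, and hence the equivalence of the Proposition with the two identities $\ell(\sigma)+e(\sigma)=\ell(\lambda)$ and $o(\sigma)-[1\in\sigma]=\ell_r(\lambda)$ for $\sigma=\beta(\lambda)$. But the argument stops exactly where the content of the Proposition lies. The mere existence of a bijection ${\mathcal O}(n)\cap{\mathcal D}(n)\to{\mathcal S}(n)$ is Schur's theorem plus doubling; everything beyond that is the pair of identities, and you derive them from a run-by-run picture of Bressoud's map ($\ell(\sigma)=\sum\lceil k/2\rceil$, $e(\sigma)=\sum\lfloor k/2\rfloor$, $o(\sigma)=$ number of odd-length runs, $1\in\sigma$ iff the run containing $1$ has odd length) which, as you yourself say, is not literally what the algorithm does, and whose justification you defer (``granting these invariants''). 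That deferred verification \emph{is} the proof; as written you have a correct reduction, not a proof. Moreover the two invariants you propose to check would not even suffice: preservation of the multiset of residues modulo $4$ pins down $\ell(\sigma)$, $e(\sigma)$ and $o(\sigma)$, but not $[1\in\sigma]$ --- a part $\equiv1\Mod4$ could a priori be moved from $5$ to $1$, or from $1$ to $5$, by the rebalancing without disturbing any residue count, and this would change $\ell(\mu)$. You need the finer statement that a part equal to $1$ is neither created nor destroyed, i.e.\ you must track an actual value, not a residue class.

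For comparison, the paper's proof never leaves the doubled picture, which is precisely what makes the needed invariance trivial to check. It builds from $\lambda$ a composition $\pi$ of pairs directly (consecutive parts of each run are paired; an odd leftover $2m-2k+1$ is doubled to $(m-k+1,m-k)$), for which $|\pi|_a=\ell_o(\pi)=\ell(\lambda)$ and $\ell(\pi)=\ell(\lambda)+\ell_r(\lambda)$ are immediate from the construction, and then reaches $\mu\in{\mathcal S}(n)$ by pair interchanges $(a+1,a\mid 2b+1,2b-1)\mapsto(2b+3,2b+1\mid a-1,a-2)$, each of which visibly preserves length, alternating sum and number of odd parts (the alternating sum is just the sum of within-pair differences, so no mod-$4$ bookkeeping and no tracking of the part $1$ on the Schur side is needed). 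In other words, the invariance argument you postponed is carried out in the paper, but in coordinates where it is a one-line observation. If you want to finish along your lines, the efficient fix is to establish such a local, pair-interchange (or $2$-modular diagram) description of Bressoud's map and verify your statistics through it, rather than citing \cite{Bd} as a black box; alternatively, adopt the doubled formulation from the start, since your Schur-side route forces you to prove strictly more (residue preservation and the behaviour of the part $1$) than the Proposition requires.
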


\begin{proof}
We obtain the map described in \cite{Bd}, but using a different algorithm. The first step is to construct a composition $\pi$ from $\lambda$ by a process of pairing and doubling, as described below. The parts $\pi_{2j-1},\pi_{2j}$ are called the pairs of $\pi$, and we demark pairs with vertical lines. We obtain $\mu$ from $\pi$ via a sequence of {\em pair interchanges}.

To construct $\pi$, let $2m-1,2m-3,\dots,2m-2k+1$ be a run in $\lambda$ (see Section \ref{SS:notation}). Pair off consecutive terms $(2m-1,2m-3)$, $(2m-5,2m-7),\dots$ proceeding from largest to smallest. If $k$ is even this exhausts all terms in the run. If $k$ is odd, this leaves the smallest term $2m-2k+1$ unpaired. In this event we double $2m-2k+1$, meaning that we replace it with the pair $(m-k+1,m-k)$. In particular a lonely $1$ becomes the pair $(1,0)$.

Let $\pi$ be the concatenation of all pairs, arranged as they are formed from left to right in $\lambda$. Notice that $|\pi|_{a}=\ell_o(\pi)=\ell(\lambda)$ as each pair $(2m+1,2m-1)$ in $\pi$ arises from 2 parts of $\lambda$ and contributes $2$ to $|\pi|_a$ and each pair $(m+1,m)$ in $\pi$ arises from 1 part of $\lambda$, contains 1 odd part and contributes 1 to $|\pi|_a$. Moreover $\ell(\pi)-\ell(\lambda)$ equals the number $\ell_r(\lambda)$ of odd runs in $\lambda$; a pair $(1,0)$ does not affect $\ell(\pi)-\ell(\lambda)$.

Suppose that $\pi$ is not a partition. Then it contains a subsequence of the form $(a+1,a\mid2b+1,2b-1)$ with $a\leq 2b+1$. Interchange the left and right pairs, and then add 2 to each part on the right and subtract 2 from each part on the left (notice that $a\geq 2$ as the right hand side represents two odd parts of $\lambda$ each less that $2a+1$). This gives a new subsequence $(2b+3,2b+1\mid a-1,a-2)$ which is in descending order and which has the same sum as the original subsequence. We call this operation a `pair interchange'.

Start with $\pi$ and sucessively apply pair interchanges as many times as possible. The resulting composition $\mu$ is a splitting partition of $n$. To conclude, notice that $|\mu|_{a}=\ell_o(\mu)=\ell(\lambda)$ and $\ell(\mu)=\ell(\lambda)+\ell_r(\lambda)$, because each pair interchange preserves the length, alternating length and number of odd parts.
\end{proof}

\medskip
{\bf Example:} consider the odd-distinct partition $\lambda=(29,21,19,17,13,11,7,5,1)$. Then $\lambda$ has runs $(29),(21,19,17),(13,11),(7,5)$ and $(1)$. So $\pi=(15,14\mid21,19\mid9,8\mid13,11\mid7,5\mid1,0)$. The sequence of pair exchanges which produces $\mu$ is
$$
\begin{array}{ccccccccccl}
(15,14&\mid&21,19&\mid&9,8&\mid&13,11&\mid&7,5&\mid&1,0)\\
&\interchange&&&&\interchange\\
(23,21&\mid&13,12&\mid&15,13&\mid&7,6&\mid&7,5&\mid&1,0)\\
&&&\interchange&&&&\interchange\\
(23,21&\mid&17,15&\mid&11,10&\mid&9,7&\mid&5,4&\mid&1,0)=\mu\\
\end{array}
$$

The method of \cite{Bd} in this example proceeds as follows: Form $\pi$ as above. Subtract $(10,10\,|\,8,8\,|\,6,6\,|\,4,4\,|\,2,2\,|\,0,0)$ from $\pi$, giving $(5,4\,|\,13,11\,|\,3,2\,|\,9,7\,|\,5,3\,|\,1,0)$. Rearrange pairs according to their sums to get $(13,11\,|\,9,7\,|\,5,4\,|\,5,3\,|\,3,2\,|\,1,0)$. Finally add back $(10,10\,|\,8,8\,|\,6,6\,|\,4,4\,|\,2,2\,|\,0,0)$ to obtain $\mu$.

The inverse of Bressoud's map uses reverse pair interchanges. Let $\mu\in{\mathcal S}(n)$ and let $t>0$ such that $\ell(\mu)\leq2t\leq\ell(\mu)+1$. Partition $\mu$ into pairs of sucessive parts $\mu=(\mu_1,\mu_2\mid\mu_3,\mu_4\mid\dots\mid\mu_{2t-1},\mu_{2t})$. Given a subsequence of parts $(2b+1,2b-1\mid a+1,a)$ with $2a+1>2b-1$, perform a {\em reverse pair interchange} to get $(a+3,a+2\mid 2b-1,2b-3)$. Repeat reverse pair interchanges as many times as is possible. This results in the a composition $\pi$. Then $\lambda$ is the unique partition whose paired composition is $\pi$.

\section{A new Bijection}\label{S:our_bijection}

In this section we prove the following theorem:%is to describe a new bijection between the odd partitions of $n$ and the distinct partitions of $n$.

\begin{Theorem}\label{T:main}
Bressoud's bijection ${\mathcal O}(n)\cap{\mathcal D}(n)\xrightarrow{\cong}{\mathcal S}(n)$ can be extended to a bijection ${\mathcal O}(n)\xrightarrow{\cong}{\mathcal D}(n)$, $\lambda\mapsto\mu$ such that
$$
|\mu|_{a}=\ell(\lambda)\quad\mbox{and}\quad\ell_o(\mu)=n_o(\lambda).%\ell(\mu)=\ell(\lambda)+\ell_r(\lambda).
$$
\end{Theorem}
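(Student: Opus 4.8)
The plan is to reduce Theorem~\ref{T:main} to Bressoud's Proposition~\ref{P:Bressoud} by peeling the ``square part'' off an odd partition. Every $\lambda\in{\mathcal O}(n)$ has a unique factorisation $\lambda=\alpha\circ\beta^2$, where $\alpha$ consists of one copy of each part of $\lambda$ of odd multiplicity and $\beta$ consists of $\lfloor m_i(\lambda)/2\rfloor$ copies of each part $i$. Then $\alpha\in{\mathcal O}(|\alpha|)\cap{\mathcal D}(|\alpha|)$ with $\ell(\alpha)=n_o(\lambda)$, while $\beta\in{\mathcal O}(|\beta|)$, and $\ell(\lambda)=\ell(\alpha)+2\ell(\beta)$. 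So the two target statistics will be accounted for once we know that the image of the $\alpha$-part supplies exactly $\ell(\alpha)$ odd parts of $\mu$ and contributes $\ell(\alpha)$ to $|\mu|_a$, while the image of $\beta^2$ contributes $2\ell(\beta)$ to $|\mu|_a$ and no odd part.

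First I would apply Bressoud's algorithm to $\alpha$, producing $\sigma\in{\mathcal S}(|\alpha|)$ with $|\sigma|_a=\ell_o(\sigma)=\ell(\alpha)$ and $\ell(\sigma)=\ell(\alpha)+\ell_r(\alpha)$. For the square part, each block of two equal parts $i,i$ of $\beta^2$ is turned into the double of $2i$, that is, into the pair $(i+1,i-1)$ --- two even numbers differing by $2$, with sum $\equiv2\Mod4$; these are exactly the pairs excluded from ${\mathcal S}(n)$ but allowed in ${\mathcal D}_{\le2}(n)$, and each contributes $2$ to $|\,\cdot\,|_a$ and contributes no odd part. One then forms a single paired composition $\pi$ of $n$ out of the pairs of $\sigma$ together with these new pairs, using a fixed canonical interleaving (say by non-increasing pair-sum after removing the Bressoud staircase), and repeatedly applies pair interchanges until a genuine partition $\mu$ is reached. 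Because every pair interchange preserves the length, the alternating sum and the number of odd parts, one reads off $|\mu|_a=\ell(\alpha)+2\ell(\beta)=\ell(\lambda)$ and $\ell_o(\mu)=\ell(\alpha)=n_o(\lambda)$.

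It then remains to check that $\mu$ really is \emph{strict} and that $\lambda\mapsto\mu$ is a bijection ${\mathcal O}(n)\xrightarrow{\cong}{\mathcal D}(n)$. For the latter I would exhibit the inverse: starting from $\mu\in{\mathcal D}(n)$, pair the parts and apply reverse pair interchanges until none applies; the resulting paired composition splits into its ${\mathcal S}$-type pairs, from which the inverse of Bressoud's map recovers $\alpha$, and the ``forbidden'' even pairs $(i+1,i-1)$, from which $\beta^2$ is recovered by un-doubling; finally $\lambda:=\alpha\circ\beta^2$. Showing this is well defined amounts to (i) confluence of the forward process --- the output $\mu$ should not depend on the interleaving chosen --- and (ii) the fact that the reverse process, run from an arbitrary strict partition, always terminates on a composition all of whose pairs have difference $1$ or $2$ and whose ${\mathcal S}$-type and forbidden-even pairs are exactly the output of some $(\alpha,\beta)$. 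When $\lambda$ is itself odd-and-distinct the square part is empty, $\pi$ coincides with Bressoud's composition, and the construction is literally that of Proposition~\ref{P:Bressoud}; hence our map extends Bressoud's.

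The hard part is the pair-interchange calculus underlying the combination step. Bressoud's single move only produces pairs of difference $1$ or $2$ and was designed to land in ${\mathcal S}(n)$; to reach all of ${\mathcal D}(n)$, whose successive-pair gaps can be arbitrarily large, one must enlarge the move set, and the delicate points are: defining these extra moves so that they still preserve all three statistics simultaneously; proving that the forward process terminates at a strict partition and is confluent; and proving that the reverse process, applied to any strict partition, lands on a composition of the expected shape. I expect the cleanest route is to encode everything in a suitable $2$-modular diagram and read off $\mu$ as a sequence of hooks, exactly as Sylvester's bijection and its inverse are presented in \S\ref{SS:Sylvester}; the diagrammatic picture should make both the termination/confluence in (i) and the surjectivity in (ii) transparent.
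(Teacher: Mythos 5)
Your reduction is exactly the one the paper uses: split $\lambda$ into its distinct (odd-multiplicity) part and its square part, send the distinct part through Bressoud's map of Proposition \ref{P:Bressoud}, represent each repeated pair $(2a-1)^2$ by the even pair $(2a,2a-2)$, and track length, alternating sum and number of odd parts through interchanges. That bookkeeping is fine. But the actual content of Theorem \ref{T:main} is the merging rule, and this is precisely what you leave undefined. You yourself observe the obstruction: pair interchanges keep every pair at difference $1$ or $2$, so any procedure built only from them lands in ${\mathcal D}_{\leq2}(n)$, never in all of ${\mathcal D}(n)$ (already $\lambda=(3,1,1)$ must go to $(4,1)$, which no sequence of pair interchanges on pairs of difference $\leq2$ can produce). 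Saying that ``one must enlarge the move set'' and appealing to an unspecified ``fixed canonical interleaving'' plus confluence of an unspecified rewriting system is not a construction; without the extra moves there is no map, no proof that the output is strict, and no inverse.

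The paper fills this gap by \emph{sequential insertion}: the pairs $\alpha_i^2$ are inserted one at a time, largest first, into the current partition $\gamma$, by prepending $(2a,2a-2)$ and interchanging it to the right; the crucial extra move is the terminal \emph{absorption} step, where the travelling pair, having shrunk to $(2,0)$ (or when $a=1$), is absorbed by adding $2$ to each of the first $2a-1$ parts of $\gamma$ --- i.e.\ by gluing a column onto the $2$-modular diagram --- and it is exactly this move that creates gaps $\mu_{2j-1}-\mu_{2j}>2$ and reaches all of ${\mathcal D}(n)$, while still adding $2$ to $|\cdot|_a$ and preserving $\ell_o$. Determinism of the insertion (no interleaving or confluence issues arise) together with the inductive invariant \eqref{E:gamma} and the uniqueness statement \eqref{E:j} is what makes the map invertible: from $\mu$ one first strips the absorbed columns read off from the excesses $\mu_{2j-1}-\mu_{2j}-2m_j$, then repeatedly removes the deepest even--even pair, as in Section \ref{SS:inverse}, recovering the inserted pairs and a splitting partition to which Bressoud's inverse applies. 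Your proposal would need to supply an analogue of this insertion operation and invariant before the claims of strictness, well-definedness and bijectivity can be established; as written, the hard part of the proof is acknowledged but not done.
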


\begin{proof}
Write $\lambda=\alpha^2\circ\beta$, for unique partitions $\alpha,\beta$ such that $\beta$ has distinct parts. So $\ell(\beta)=n_o(\lambda)$ and $\ell_r(\beta)=\ell_r(\lambda)$. Let $\gamma^{(0)}$ be the image of $\beta$ under Bressoud's map. 

Set $t:=\ell(\alpha)$. We construct a sequence of partitions $\gamma^{(1)},\dots,\gamma^{(t)}$, such that $\gamma^{(i)}$ is obtained by {\em inserting} $\alpha_i^2$ into $\gamma^{(i-1)}$, for $i=1,\dots,t-1$. Then each $\gamma^{(i)}$ has distinct parts and the final one $\gamma^{(t)}$ will be the image $\mu\in{\mathcal D}(n)$ of $\lambda$ under our bijection. Note that we only insert $\alpha_i^2$ after inserting the pairs $\alpha_1^2,\dots,\alpha_{i-1}^2$ as insertions do not commute.

In order to simplify notation we describe the insertion of an odd pair $(2a-1)^2$ into a partition $\gamma$. We cannot assume that $\gamma$ is a splitting partition, as $\gamma^{(i)}$ is not a splitting partition for $i>0$. However we can assume by induction that
\begin{equation}\label{E:gamma}
\begin{aligned}
&\gamma_{2j-1}-\gamma_{2j}=\mbox{$1$ or $2$ for $j<a$ and}\\
&\mbox{if $\gamma_{2j-1}+\gamma_{2j}\equiv2\Mod4$, with $1\leq j\leq a$, then $\gamma_{2j-1}\geq2(a+1-j)$.}
\end{aligned}
\end{equation}

Here is how pair insertion works. If $a=1$, replace $\gamma$ with the partition $(\gamma_1+2,\gamma_2\mid \gamma_3,\gamma_4\mid\dots)$. Otherwise represent $(2a-1)^2$ by the pair $(2a,2a-2)$ and form the composition $(2a,2a-2\mid\gamma_1,\gamma_2\mid\gamma_3,\gamma_4\mid\dots)$. If $2a-2>\gamma_1$, this composition is a partition, and we are done. Otherwise, perform a pair interchange on the first two pairs to get the composition $(\gamma_1+2,\gamma_2+2\mid 2a-2,2a-4\mid\gamma_3,\gamma_4\mid\dots)$. Notice that by our inductive hypothesis $\gamma_2+2\geq\gamma_1\geq2a$. So the first 4 terms in the new composition are in decreasing order.

Now if $2a-4>\gamma_3$, the new composition is a partition, and we are done. If instead $a=2$, replace the composition with $(\gamma_1+2,\gamma_2+2\mid \gamma_3+2,\gamma_4\mid\gamma_5,\dots)$. Otherwise, perform a pair interchange on the second and third pairs to get $(\gamma_1+2,\gamma_2+2\mid\gamma_3+2,\gamma_4+2\mid 2a-4,2a-6\mid\gamma_5,\gamma_6\mid\dots)$. Proceeding in this manner we obtain a partition of the form:
$$
\begin{aligned}
(\gamma_1+2,\gamma_2+2\mid\dots\mid\gamma_{2a-1}+2,\gamma_{2a}\mid\gamma_{2a+1},\gamma_{2j+2}\dots)\quad\mbox{if $2a-1\leq\ell(\gamma)+1$ or}\\ 
(\gamma_1+2,\gamma_2+2\mid\dots,\gamma_{2j}+2\mid2a-2j,2a-2j-2\mid\gamma_{2j+1},\gamma_{2j+2},\dots)\quad\mbox{otherwise.}\\ 
\end{aligned}
$$
It is clear that this partition satisfies the conditions \eqref{E:gamma} with respect to any odd pair $(2a'-1)^2$ with $a'\leq a$. Moreover the process of pair insertion shows that if $\gamma$  is a partition satisfying the conditions \eqref{E:gamma}, and $2a-1>\ell(\gamma)+1$ then
\begin{equation}\label{E:j}
\mbox{there is a unique $j\geq0$ such that $\gamma_{2j}>2a-2j-2>\gamma_{2j+1}$.}
\end{equation}

Now Proposition \ref{P:Bressoud} implies that $|\gamma^{(0)}|_a=\ell_o(\gamma^{(0)})=\ell(\beta)$. Moreover $|\gamma^{(i)}|_{a}=|\gamma^{(i-1)}|_{a}+2$ and $\ell_o(\gamma^{(i)})=\ell_o(\gamma^{(i-1)})$, for $i=1,\dots,t-1$. Also $\ell(\alpha^2)=2t$ and $\gamma^{(t)}=\mu$. We conclude that $|\mu|_{a}=\ell(\lambda)$ and $\ell_o(\mu)=n_o(\lambda)$. We prove that our map is invertible in Section \ref{SS:inverse}.\end{proof}

\subsection{Example} We apply the algorithm described in Theorem \ref{T:main} to the odd partition $\lambda=(13^4,11^2,9,5^5,3^3,1^4)$ of $121$. Then $\alpha=(13^2,11,5^2,3,1^2)$ and $\beta=(9,5,3)$. Bressoud's bijection maps $\beta$ to $\gamma^{(0)}=(7,5,3,2)$.

Now $\ell(\alpha)=8$. We construct $\gamma^{(1)},\dots,\gamma^{(8)}$ by sucessive insertions of the pairs $13^2,13^2,11^2,5^2,5^2,3^2,1^2,1^2$. To insert $13^2$ into $\gamma$, prepend $\gamma^{(0)}$ with $(14,12)$ to get $\gamma^{(1)}=(14,12\mid7,5\mid3,2)$. To insert the second $13^2$ into $\gamma^{(1)}$, first form $(14,12\mid14,12\mid7,5\mid3,2)$. Then one pair interchange produces $\gamma^{(2)}$:
$$
\begin{array}{cccccccc}
(\textcolor{darkred}{14,12}&\mid&\textcolor{darkblue}{14,12}&\mid&7,5&\mid&3,2)\\
&\interchange\\
(\textcolor{darkblue}{16,14}&\mid&\textcolor{darkred}{12,10}&\mid&7,5&\mid&3,2)&=\gamma^{(2)}\\
\end{array}
$$
Next $a_3=6$ and $11^2$ is initially represented by $(12,10)$. As $\gamma^{(1)}_6>2(6-3-1)>\gamma^{(1)}_7$, we have $j=3$ in \eqref{E:j}. So starting with $(\textcolor{darkred}{12,10}\mid{16,14}\mid{12,10}\mid{7,5}\mid3,2)$, and applying 3 pair interchanges, we get $\gamma^{(3)}=(\textcolor{darkblue}{18,16}\mid\textcolor{darkblue}{14,12}\mid\textcolor{darkblue}{9,7}\mid\textcolor{darkred}{6,4}\mid3,2)$.

Now for each remaining $\alpha_i=5,5,3,1,1$, we have $\alpha_i\leq\ell(\gamma^{(i-1)})+1$. So $\gamma^{(i)}$ is got by adding $2$ onto each of the first $\alpha_i$ parts of $\gamma^{(i-1)}$, for $i=4,5,6,7,8$. Thus
$$
\begin{array}{ccccccccccccccc}
\gamma^{(4)}&=&(\textcolor{darkred}{20},\textcolor{darkred}{18}&\mid&\textcolor{darkred}{16},\textcolor{darkred}{14}&\mid&\textcolor{darkred}{11},7&\mid&6,4&\mid&3,2)\\
\gamma^{(5)}&=&(\textcolor{darkred}{22},\textcolor{darkred}{20}&\mid&\textcolor{darkred}{18},\textcolor{darkred}{16}&\mid&\textcolor{darkred}{13},7&\mid&6,4&\mid&3,2)\\
\gamma^{(6)}&=&(\textcolor{darkred}{24},\textcolor{darkred}{22}&\mid&\textcolor{darkred}{20},16&\mid&13,7&\mid&6,4&\mid&3,2)\\
\gamma^{(7)}&=&(\textcolor{darkred}{26},22&\mid&20,16&\mid&13,7&\mid&6,4&\mid&3,2)\\
\gamma^{(8)}&=&(\textcolor{darkred}{28},22&\mid&20,16&\mid&13,7&\mid&6,4&\mid&3,2)\\
\end{array}
$$
Notice that adding the terms of $2(5,5,3,1,1)'=(10,6,6,4,4)$ to $\gamma^{(3)}$ produces $\gamma^{(8)}$ in one step.  

The calculation above shows that our bijection maps $\lambda$ to the distinct partition $\mu=(28,22,20,16,13,7,6,4,3,2)$. As expected $|\mu|_a=\ell(\lambda)$ and $\ell_o(\mu)=n_o(\lambda)$, with common values $19$ and $3$, respectively.

\subsection{The inverse}\label{SS:inverse} Let $\mu\in{\mathcal D}(n)$. We will show how to construct an odd partition $\lambda$ of $n$ such that $\mu$ is the image of $\lambda$ under the algorithm described in the proof of Theorem \ref{T:main}.

Let $t>0$ such that $\ell(\mu)\leq2t\leq\ell(\mu)+1$. Then for each $j=1,\dots,t$ there is a unique non-negative integer $m_j$ such that $\mu_{2j-1}-\mu_{2j}-2m_j=1$ or $2$. Subtracting the parts of the conjugate of $((2t-1)^{2m_t},(2t-3)^{m_{2t-1}},\dots,3^{2m_3},1^{2m_1})$ from $\mu$ leaves a partition $\gamma$. This reverses the process of inserting, for $j=t,t-1,\dots,1$ in turn, $m_j$ pairs $(2j-1)^2$ into $\gamma$ to produce $\mu$, in the language of the proof of Theorem \ref{T:main}. This contributes $m_j$ pairs $(2j-1)^2$ to $\lambda$, for each $j=1,\dots,t$. 

Next we work with $\gamma$. Now $\gamma_{2j-1}-\gamma_{2j}=1$ or $2$, for $j=1,\dots,t$. Suppose that $\gamma$ is not a splitting partition. Then there is a largest $j\geq0$ such that $\gamma_{2j+1}$ and $\gamma_{2j+2}$ are both even. Set $\gamma':=(\gamma_1-2,\dots,\gamma_{2j}-2,\gamma_{2j+3},\gamma_{2j+4},\dots,\gamma_{2t})$. By the condition in \eqref{E:j}, inserting the pair $(\gamma_{2j+1}+2j-1)^2$ into $\gamma'$ produces $\gamma$. This contributes a pair $(\gamma_{2j}+2j-1)^2$ to $\lambda$. Repeat this procedure with $\gamma'$, if necessary.

The process above terminates in a splitting partition $\gamma^{(0)}$. Let $\beta$ be the odd-distinct partition which corresponds to $\nu$ under the inverse of Bressoud's bijection. Taking the disjoint union of the parts of $\beta$ with all the parts extracted from $\mu$ above produces the odd partition $\lambda$.

\subsection{Graphical description of the inverse}\label{SS:our_inverse}  We give a graphical construction for the inverse of the bijection of Theorem \ref{T:main}. Let $\mu\in{\mathcal D}(n)$ and recall that $[\mu]_2$ is the 2-modular diagram of $\mu$. Let $r_i$ be the $i$-th row in $[\mu]_2$, for $i=1,\dots\ell(\mu)$. So $r_i$ consists of $\lfloor\frac{\mu_i+1}{2}\rfloor$ boxes; the rightmost box contains a $1$ or $2$ as $\mu_i$ is odd or even, respectively, and all other boxes contain a $2$.

Now $r_1$ forms the first row of our diagram. Place $r_2$ below the first row, aligning the rightmost box in $r_2$ with the {\em second to right} box in the first row. The third row $r_3$ is put below the second row, aligning the leftmost box in $r_3$ with the leftmost box in the second row. Repeat this process, alternating between left and right (cf. the diagram of Section \ref{SS:inverse} above).

For example, consider the distinct partition $\mu=(28,22,20,16,13,7,6,4,3,2)$, which is the image of the odd partition $\lambda=(13^4,11^2,9,5^5,3^3,1^4)$ under our bijection.

\begin{figure}[h]
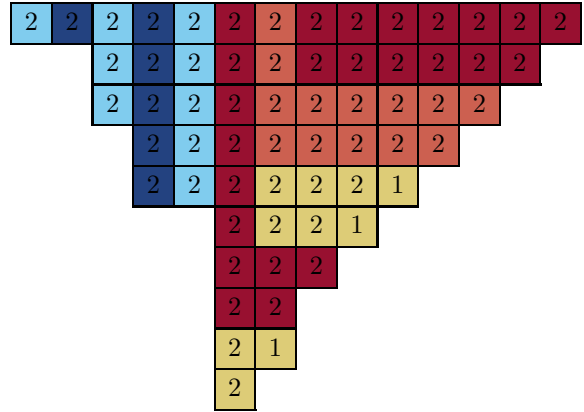

\caption{Inverse of our bijection for $\mu=(28,22,20,16,13,7,6,4,3,2)$}
\label{f:imy2-modular}
$$
\begin{ytableau}
*(lightblue)2 &*(darkblue)2 &*(lightblue)2 &*(darkblue)2
&*(lightblue)2 &*(darkred)2 &*(lightred)2 &*(darkred)2
&*(darkred)2 &*(darkred)2 &*(darkred)2 &*(darkred)2
&*(darkred)2 &*(darkred)2\\
\none&\none&*(lightblue)2 &*(darkblue)2 &*(lightblue)2 &*(darkred)2
&*(lightred)2 &*(darkred)2 &*(darkred)2 &*(darkred)2
&*(darkred)2 &*(darkred)2 &*(darkred)2 \\
\none&\none&*(lightblue)2 &*(darkblue)2 &*(lightblue)2 &*(darkred)2
&*(lightred)2 &*(lightred)2 &*(lightred)2 &*(lightred)2
&*(lightred)2 &*(lightred)2\\
\none&\none&\none&*(darkblue)2 &*(lightblue)2 &*(darkred)2 &*(lightred)2
&*(lightred)2 &*(lightred)2 &*(lightred)2 &*(lightred)2\\
\none&\none&\none&*(darkblue)2 &*(lightblue)2 &*(darkred)2 &*(mediumgreen)2
&*(mediumgreen)2 &*(mediumgreen)2 &*(mediumgreen)1\\
\none&\none&\none&\none&\none&*(darkred)2 &*(mediumgreen)2 &*(mediumgreen)2 &*(mediumgreen)1\\
\none&\none&\none&\none&\none&*(darkred)2 &*(darkred)2 &*(darkred)2\\
\none&\none&\none&\none&\none&*(darkred)2 &*(darkred)2\\
\none&\none&\none&\none&\none&*(mediumgreen)2 &*(mediumgreen)1\\
\none&\none&\none&\none&\none&*(mediumgreen)2\\
\end{ytableau}
$$
\end{figure}

The 4 leftmost columns in the diagram have odd length and represent the pairs of parts $(5^2,5^2,3^2,1^2,1^2)$ of $\lambda$. Removing these columns yields the 2-modular diagram of $\gamma=(18,16,14,12,9,7,6,4,3,2)$. By construction $\gamma_{2j-1}-\gamma_{2j}=1$ or $2$, for $j=1,\dots,5$.

Let $j$ be the largest positive integer such that the rightmost box in each of the rows $2j-1$ and $2j$ contains a $2$. Remove the boxes in rows $2j-1$ and $2j$ and all boxes which are in the first column and in one of the first $2j-2$ rows of $\gamma$. This yields the pair of odd parts $(\frac{\gamma_{2j-1}+\gamma_{2j}}{2}+2j-2)^2$ for $\lambda$. In our example $j=4$. So the sum of the contents of the removed boxes is $22$, yielding the pair of parts $11^2$ for $\lambda$. The remaining boxes form the 2-modular diagram of $(16,14,12,10,7,5,3,2)$.

Continue this process. The next iteration removes the remaining boxes in rows $3$ and $4$ and the $2$ boxes in rows $1$ and $2$ which are in the first column. The sum of the contents of these boxes is $26$. This encodes the pair of parts $13^2$ of $\lambda$. What remains is the 2-modular diagram of $(14,12,7,5,3,2)$. The final iteration yields $26$ from the contents of the remaining first two rows, encoding another pair of parts $13^2$ of $\lambda$.

Having exhausted these box removals, we are left with the 2-modular diagram of $\gamma=(7,5,3,2)\in{\mathcal S}(17)$. As we showed earlier, Bressoud maps this to $(9,5,3)\in{\mathcal O}(17)\cap{\mathcal D}(17)$. Concatenate this with the extracted pairs, and rearrange the parts into descending order in order to obtain $\lambda=(13^4,11^2,9,5^5,3^3,1^4)$.

\subsection{Comparison with the bijection of Chen, Gao, Ji and Li}

As we noted in the introduction, a bijection ${\mathcal O}(n)\rightarrow{\mathcal D}(n)$ which maps length to alternating sum and number of odd multiplicity parts to number of odd parts was first constructed in \cite{CGJL}. According to the example in that paper, the image of the distinct partition $(17,16,14,10,7,4,2,1)\in{\mathcal D}(71)$ under this bijection is the odd partition $(19,13,9^2,5^3,3^2)$. However, the reader can check that the inverse of the bijection of Theorem \ref{T:main} maps $(17,16,14,10,7,4,2,1)$ to $(21,11^2,9,5^2,3^3)$.% So these bijections are different.

\subsection{A local description of Sylvester's map}

We mention that Sylvester's map can be implemented via pair interchanges. To describe this, let $\lambda\in{\mathcal O}(n)$ have length $\ell$. Set $\nu^{(1)}=(\frac{\lambda_1+1}{2},\frac{\lambda_1-1}{2})$, and for each $j=2,\dots,\ell$, form a partition $\nu^{(j)}$ by prepending $(\frac{\lambda_j+1}{2},\frac{\lambda_j-1}{2})$ onto $\nu^{(j-1)}$ and performing pair interchanges of the form $(a,a-1\mid b,c)\mapsto(b+1,c+1\mid a-1,a-2)$ (with the exceptional case $(1,0\mid b,c)\mapsto(b+1,c)$). Then $\mu=\nu^{(\ell)}$.

From the example in \ref{SS:Sylvester}, we know that $(13^2,11,5^2,3,1^2)\mapsto(14,11,10,8,6,3)$ under Sylvester's map. Starting with $\nu^{(1)}=(7,6)$, the sequence of $\nu^{(j)}$'s is:
$$
\begin{aligned}
(7,6\mid7,6)\rightarrow(8,7\mid6,5)&=:\nu^{(2)}\\ 
(6,5\mid 8,7\mid6,5)\rightarrow(9,8\mid7,6\mid4,3)&=:\nu^{(3)}\\ 
(3,2\mid9,8\mid7,6\mid4,3)\rightarrow(\textcolor{darkred}{10,9}\mid\textcolor{darkred}{8,7}\mid\textcolor{darkred}{5},3)&=:\nu^{(4)}\\ 
(3,2\mid10,9\mid8,7\mid5,3)\rightarrow(\textcolor{darkred}{11,10}\mid\textcolor{darkred}{9,8}\mid\textcolor{darkred}{6},3)&=:\nu^{(5)}\\ 
(2,1\mid11,10\mid9,8\mid6,3)\rightarrow(\textcolor{darkred}{12,11}\mid\textcolor{darkred}{10},8\mid6,3)&=:\nu^{(6)}\\ 
(1,0\mid12,11\mid10,8\mid6,3)\rightarrow(\textcolor{darkred}{13},11\mid10,8\mid6,3)&=:\nu^{(7)}\\ 
(1,0\mid13,11\mid10,8\mid6,3)\rightarrow(\textcolor{darkred}{14},11\mid10,8\mid6,3)&=:\mu\\ 
\end{aligned}
$$

For a partition $\mu$ of $n$, $2\mu:=(2\mu_1,\dots,2\mu_\ell)$ is a partition of $2n$ into even parts. Now our bijection ${\mathcal O}(n)\rightarrow{\mathcal D}(n)$ maps the odd partitions of $2n$ whose multiplicities are all even even bijectively onto the distinct partitions of $2n$ whose parts are all even. Concerning this bijection we have:

\begin{Corollary}
Let $\lambda\in{\mathcal O}(n)$ have image $\mu\in{\mathcal D}(n)$ under Sylvester's bijection. Then $\lambda^2\in{\mathcal O}(2n)$ has image $2\mu\in{\mathcal D}(2n)$ under the bijection of Theorem \ref{T:main}.
\begin{proof}
In order to construct $\mu$, we start with $\nu^{(0)}=()$ and for $j\geq1$, we construct $\nu^{(j)}$ by performing pair exchanges on $(\frac{\lambda_i+1}{2},\frac{\lambda_i-1}{2}\mid\nu^{(j-1)}_1,\nu^{(j-1)}_2\mid\nu^{(j-1)}_3,\nu^{(j-1)}_4\mid\dots)$. Then $\mu=\nu^{(\ell)}$.

Consider the image of $\lambda^2=(\lambda_1^2,\dots,\lambda_\ell^2)$ under the bijection  of Theorem \ref{T:main}. We start with the empty partition $\gamma^{(0)}=()$, and for $j\geq1$, we construct $\gamma^{(j)}$ by performing pair exchanges on $(\lambda_j+1,\lambda_j-1\mid\gamma^{(j-1)}_1,\gamma^{(j-1)}_2\mid\gamma^{(j-1)}_3,\gamma^{(j-1)}_4\mid\dots)$. Comparing this with the previous paragraph, we see that $\gamma^{(\ell)}=2\mu$.
\end{proof}
\end{Corollary}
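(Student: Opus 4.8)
The plan is to run the two algorithms in parallel and show that at every stage the partition produced by the algorithm of Theorem~\ref{T:main} on $\lambda^2$ is precisely twice the partition produced by the local (pair-interchange) description of Sylvester's map on $\lambda$. The first observation is that $m_k(\lambda^2)=2m_k(\lambda)$ for every $k$, so $\lambda^2$ has all multiplicities even; hence in the decomposition $\lambda^2=\alpha^2\circ\beta$ (with $\beta$ of distinct parts) one is forced to have $\beta=()$ and $\alpha=\lambda$. Consequently $\gamma^{(0)}$, the image of $\beta=()$ under Bressoud's bijection, is the empty partition, and the algorithm of Theorem~\ref{T:main} on $\lambda^2$ amounts to successively inserting the odd pairs $\lambda_1^2,\dots,\lambda_\ell^2$ (in that, i.e. non-increasing, order) into $\gamma^{(0)}=()$, where $\ell=\ell(\lambda)$. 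Sylvester's local description processes the parts $\lambda_1,\dots,\lambda_\ell$ in the same order.

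I would then record both one-step operations in a uniform shape. Inserting the odd pair $\lambda_j^2=(2a-1)^2$, with $a=\tfrac12(\lambda_j+1)$, into $\gamma^{(j-1)}$ means forming $(2a,2a-2\mid\gamma^{(j-1)}_1,\gamma^{(j-1)}_2\mid\dots)$ and repeatedly applying the interchange $(U_1,U_2\mid V_1,V_2)\mapsto(V_1+2,V_2+2\mid U_1-2,U_2-2)$ to the travelling pair and the pair immediately to its right, halting once the composition is weakly decreasing, with the collapse $(2,0\mid V_1,V_2)\mapsto(V_1+2,V_2)$ applied if the travelling pair ever shrinks to $(2,0)$ (this is the whole operation when $a=1$, i.e. $\lambda_j=1$). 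The local description of Sylvester's map builds $\nu^{(j)}$ from $\nu^{(j-1)}$ in exactly the same manner, starting from $(a,a-1\mid\nu^{(j-1)}_1,\nu^{(j-1)}_2\mid\dots)$ and repeatedly applying $(U_1,U_2\mid V_1,V_2)\mapsto(V_1+1,V_2+1\mid U_1-1,U_2-1)$, with collapse $(1,0\mid V_1,V_2)\mapsto(V_1+1,V_2)$. The key point is that each interchange or collapse of Theorem~\ref{T:main}'s algorithm, when every entry involved is even, equals twice the corresponding interchange or collapse for Sylvester's map carried out on the halved entries, while the halting criterion ``$U_2>V_1$'' is insensitive to a common factor of $2$; so, starting from compositions that differ exactly by the factor $2$, the two processes fire matching steps and stop together.

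These facts combine into an induction on $j$ with hypothesis $\gamma^{(j)}=2\nu^{(j)}$. The base case $\gamma^{(0)}=()=2\nu^{(0)}$ is immediate. For the inductive step, $\gamma^{(j-1)}=2\nu^{(j-1)}$ makes the starting composition $(\lambda_j+1,\lambda_j-1\mid\gamma^{(j-1)}_1,\dots)$ equal to twice Sylvester's starting composition $(\tfrac12(\lambda_j+1),\tfrac12(\lambda_j-1)\mid\nu^{(j-1)}_1,\dots)$, so by the previous paragraph the two interchange-sequences run in lockstep, keeping the doubled relation at every intermediate composition and terminating together; thus $\gamma^{(j)}=2\nu^{(j)}$. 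Taking $j=\ell$ gives $\gamma^{(\ell)}=2\nu^{(\ell)}=2\mu$, and $\gamma^{(\ell)}$ is by construction the image of $\lambda^2$ under the bijection of Theorem~\ref{T:main}.

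The step I expect to demand the most care is the lockstep claim, specifically seeing that the interchanges which actually fire on the doubled side are exactly the doubles of those on the Sylvester side. This reduces to two things. First, the validity condition~\eqref{E:gamma}, which keeps the compositions weakly decreasing during the insertions, must hold at every insertion into $\gamma^{(j-1)}=2\nu^{(j-1)}$; but this is already established in the proof of Theorem~\ref{T:main}, since we are only ever applying that bijection to the bona fide odd partition $\lambda^2\in{\mathcal O}(2n)$, on which the algorithm is defined and each $\gamma^{(j)}$ is a genuine partition. Second, the exceptional/collapse cases (the travelling pair having length one, or shrinking to $(2,0)$ on the one side versus $(1,0)$ on the other) must be lined up, which is a short direct check.
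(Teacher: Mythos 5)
Your proposal is correct and takes essentially the same approach as the paper: since every part of $\lambda^2$ has even multiplicity, the decomposition forces $\alpha=\lambda$, $\beta=()$ and $\gamma^{(0)}=()$, and each insertion of a pair $\lambda_j^2$ is exactly the doubled version of the pair-interchange step in the local description of Sylvester's map, so the two processes run in lockstep and $\gamma^{(\ell)}=2\nu^{(\ell)}=2\mu$. The paper records this comparison very tersely; your write-up simply makes the doubling/lockstep induction and the matching of the exceptional collapse cases explicit.
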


For example our bijection maps $(13^4,11^2,5^4,3^2,1^4)$ to $2\mu=(28,22,20,16,12,6)$.

\subsection{Spin regular partitions}

Recall from Section \ref{SS:notation} that $\ell_r(\lambda)$ is the number of odd runs in $\lambda\vdash n$. We define the odd--with-distinct-small partitions of $n$ as
$$
\mathcal{ODS}(n):=\{\lambda\in{\mathcal O}(n)\mid m_i(\lambda)\leq1,\mbox{ for $1\leq i\leq\ell(\lambda)+\ell_r(\lambda)-2$}\}.
$$
Recall from Section \ref{SS:double} that $\mathcal{D}_{\leq2}(n)$ is the set of distinct partitions of $n$ which are the doubles of the spin regular partitions $\mathcal{S}(n)$ of $n$.

\begin{Proposition}
The bijection in Theorem \ref{T:main} restricts to a bijection $\mathcal{ODS}(n)\xrightarrow{\cong}\mathcal D_{\leq2}(n)$. In particular the number of spin regular partitions of\/ $n$ equals the number of odd partitions $\lambda$ of\/ $n$ which have no repeated part less than $\ell(\lambda)+\ell_r(\lambda)-1$.
\begin{proof}
As $|\mathcal{D}_{\leq2}(n)|=|\mathcal{S}(n)|$, the last assertion follows from the first.

Let $\lambda\in{\mathcal O}(n)$ have image $\mu\in{\mathcal D}(n)$ under the bijection of Theorem \ref{T:main}. Let $m\geq1$ such that $\ell(\mu)\leq2m\leq\ell(\mu)+1$. Recall the notation used in the proof of Theorem \ref{T:main}. In particular $\lambda=\alpha^2\circ\beta$, where $\alpha$ has $t$ parts and $\beta$ has distinct parts. Also $\beta$ is mapped to $\gamma^{(0)}$ under the bijection of Proposition \ref{P:Bressoud}, and there are distinct partitions $\gamma^{(1)},\dots,\gamma^{(t)}=\mu$. So $\ell(\gamma^{(0)})=\ell(\beta)+\ell_r(\lambda)$, according to Proposition \ref{P:Bressoud}.

Suppose first that $\mu\in{\mathcal D}_{\le2}(n)$. If $\mu_{2m-1}=2$ and $\mu_{2m}=0$, then $\lambda$ has the pair of parts $(2m-1)^2$, and all other repeated parts in $\lambda$ are $\geq2m-1$. Moreover by induction $\ell(\gamma^{(i)})=\ell(\gamma^{(i-1)})+2$ for $i=1,\dots,t-1$ and $\ell(\mu)=\ell(\gamma^{(t-1)})+1$. So $2m-1=\ell(\mu)=\ell(\lambda)+\ell_r(\lambda)-1$ and thus $\lambda\in\mathcal{ODS}(n)$.

The other possibility is that $\mu_{2m-1}=1$ or $\mu_{2m}>0$. Then arguing as in the first case, $\ell(\mu)=\ell(\lambda)+\ell_r(\lambda)$ and $\lambda$ no repeated part less than $\ell(\mu)$. So again $\lambda\in\mathcal{ODS}(n)$. 

Conversely, suppose that $\lambda\in\mathcal{ODS}(n)$. Notice that $(\alpha_1,\dots,\alpha_j)^2\circ\beta\in\mathcal{ODS}(*)$, for each $j=0,1,\dots,t$. Also $\gamma^{(j)}$ is the image of $(\alpha_1,\dots,\alpha_j)^2\circ\beta$ under the bijection of Theorem \ref{T:main}. So by induction we may assume that
$$
\ell(\gamma^{(t-1)})=\ell(\beta)+\ell_r(\beta)+2t-2=\ell(\lambda)+\ell_r(\lambda)-2.
$$
Then our hypothesis on $\lambda$ implies that $\alpha_t>\ell(\gamma^{(t-1)})$. If $\alpha_t=\ell(\gamma^{(t-1)})+1$ then $\mu$ is got by adding $2$ to each part of 
$\gamma^{(t-1)}$ and then appending one part $2$. So $\ell(\mu)=\ell(\lambda)+\ell_r(\lambda)-1$ and $\mu\in\mathcal D_{\leq2}(n)$. If instead $\alpha_t>\ell(\gamma^{(t-1)})+1$ then there is some $j\geq0$ such that $\mu_i=\gamma^{(t-1)}_i+2$, for $i=1,\dots,2j$, and $\mu_{2j+1}=\alpha_t-2j+1,\mu_{2j+2}=\alpha_t-2j-1$ and $\mu_i=\gamma^{(t-1)}_{i-2}$, for $i\geq 2j+3$. Thus $\ell(\mu)=\ell(\lambda)+\ell_r(\lambda)$ and once again $\mu\in\mathcal D_{\leq2}(n)$. 
\end{proof}
\end{Proposition}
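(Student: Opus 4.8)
The plan is to establish the claimed bijection $\mathcal{ODS}(n)\xrightarrow{\cong}\mathcal{D}_{\leq2}(n)$ by showing that the bijection of Theorem~\ref{T:main} and its inverse each carry one set into the other; since the map is already known to be a bijection, it then restricts to a bijection between these subsets, and the numerical consequence follows from $|\mathcal{D}_{\leq2}(n)|=|\mathcal{SR}(n)|$ via the doubling bijection of Section~\ref{SS:double}. The key data to track throughout is the length $\ell(\gamma^{(i)})$ of the intermediate partitions, because membership in $\mathcal{D}_{\leq2}(n)$ is governed by whether the last ``pair'' $(\mu_{2m-1},\mu_{2m})$ has the degenerate form $(2,0)$ or not, and membership in $\mathcal{ODS}(n)$ is governed by the threshold $\ell(\lambda)+\ell_r(\lambda)-1$; the recursion $\ell(\gamma^{(i)})=\ell(\gamma^{(i-1)})+2$ for $i<t$ (with a possible drop by one at the last step) is exactly what links these two thresholds.

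First I would recall from Proposition~\ref{P:Bressoud} that $\ell(\gamma^{(0)})=\ell(\beta)+\ell_r(\beta)=\ell(\beta)+\ell_r(\lambda)$, using $\ell_r(\beta)=\ell_r(\lambda)$ and $\ell(\beta)=n_o(\lambda)$ from the decomposition $\lambda=\alpha^2\circ\beta$. Next I would prove, by induction on $i$, that pair insertion of an odd pair $(2a-1)^2$ with $2a-1\leq\ell(\gamma)+1$ adds exactly two to the length (the new part $2$ at the end plus the shift), whereas if $2a-1>\ell(\gamma)+1$ the length increases by two with the insertion of the pair $(2a-2j,2a-2j-2)$ described after \eqref{E:j}; in all cases $\ell(\gamma^{(i)})=\ell(\gamma^{(i-1)})+2$ for $i=1,\dots,t-1$. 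For the final insertion $i=t$, the two subcases are: $\alpha_t=\ell(\gamma^{(t-1)})+1$, giving $\ell(\mu)=\ell(\gamma^{(t-1)})+1$ and a trailing part $\mu_{\ell(\mu)}=2$ (so $\mu$ has odd length and last pair $(2,0)$); or $\alpha_t>\ell(\gamma^{(t-1)})+1$, giving $\ell(\mu)=\ell(\gamma^{(t-1)})+2$ and a last pair of the non-degenerate type. Combining the recursions yields $\ell(\gamma^{(t-1)})=\ell(\beta)+\ell_r(\lambda)+2(t-1)=\ell(\lambda)+\ell_r(\lambda)-2$, which is precisely the threshold in the definition of $\mathcal{ODS}(n)$.

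With this in hand, the forward direction runs as follows: if $\lambda\in\mathcal{ODS}(n)$, then no repeated part of $\lambda$ is less than $\ell(\lambda)+\ell_r(\lambda)-1=\ell(\gamma^{(t-1)})+1$, i.e.\ $\alpha_t\geq\ell(\gamma^{(t-1)})+1$, so we are in one of the two subcases of the last insertion and in both cases $\mu$ satisfies $\mu_{2j-1}-\mu_{2j}=1$ or $2$ for all $j$ (the earlier pairs keep this property inductively since the pre-image of $\gamma^{(t-1)}$ under the same bijection is $(\alpha_1,\dots,\alpha_{t-1})^2\circ\beta\in\mathcal{ODS}(\ast)$, and the last insertion only shifts the head by $2$ and inserts one well-behaved pair or trailing $2$); hence $\mu\in\mathcal{D}_{\leq2}(n)$. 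Conversely, if $\mu\in\mathcal{D}_{\leq2}(n)$, the inverse algorithm of Section~\ref{SS:inverse} has all $m_j=0$ so $\gamma=\mu$ already, and it extracts exactly one pair $(2m-1)^2$ if $(\mu_{2m-1},\mu_{2m})=(2,0)$ and none otherwise, leaving a splitting partition $\gamma^{(0)}$; reading off $\alpha$ and $\beta$ one checks $\alpha_t\geq\ell(\gamma^{(t-1)})+1$ in each case, giving $\ell(\lambda)+\ell_r(\lambda)-1\leq\alpha_t$, and since all $\alpha_i$ are weakly decreasing and all repeated parts of $\lambda$ lie among the $\alpha_i$, this shows $\lambda\in\mathcal{ODS}(n)$.

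The main obstacle I anticipate is the careful bookkeeping at the boundary between the ``$\alpha_t\leq\ell(\gamma^{(t-1)})+1$'' and ``$\alpha_t>\ell(\gamma^{(t-1)})+1$'' regimes: one must verify that $\alpha_t=\ell(\gamma^{(t-1)})+1$ really does produce a trailing part exactly equal to $2$ (not $1$, which would force $\mu\notin\mathcal{D}_{\leq2}$) and that it is consistent with the conditions \eqref{E:gamma} and the uniqueness statement \eqref{E:j}; the other insertions $\alpha_1,\dots,\alpha_{t-1}$ need the inductive hypothesis that $(\alpha_1,\dots,\alpha_j)^2\circ\beta\in\mathcal{ODS}(\ast)$, which requires checking that removing the smallest $\alpha$-pair from an $\mathcal{ODS}$ partition and re-sorting stays in $\mathcal{ODS}$ of the appropriate total — this is where the hypothesis $m_i(\lambda)\leq1$ for $i\leq\ell(\lambda)+\ell_r(\lambda)-2$ is used delicately, since $\ell_r$ is preserved but $\ell$ drops by $2$ at each such removal, exactly matching the drop in the threshold.
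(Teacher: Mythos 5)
Your skeleton matches the paper's (decompose $\lambda=\alpha^2\circ\beta$, track $\ell(\gamma^{(i)})$, compare the threshold $\ell(\lambda)+\ell_r(\lambda)-1$ with $\ell(\gamma^{(t-1)})+1$ at the last insertion), but two of your key claims are false as stated, and both directions of your argument lean on them. First, it is not true that a pair insertion with $2a-1\le\ell(\gamma)+1$ ``adds exactly two to the length'', nor that $\ell(\gamma^{(i)})=\ell(\gamma^{(i-1)})+2$ holds for $i=1,\dots,t-1$ in all cases. If $2a-1\le\ell(\gamma)$ the insertion only adds $2$ to each of the first $2a-1$ parts and the length is unchanged (in the paper's worked example $\gamma^{(4)},\dots,\gamma^{(8)}$ all have length $10$), and if $2a-1=\ell(\gamma)+1$ the length grows by exactly $1$ --- which you yourself use correctly at the step $i=t$, so your two statements contradict each other. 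The recursion for $i<t$ is not a general fact but a consequence of the $\mathcal{ODS}$ (resp.\ $\mathcal{D}_{\le2}$) hypothesis: one must argue inductively, as the paper does via $(\alpha_1,\dots,\alpha_j)^2\circ\beta\in\mathcal{ODS}(*)$, that each $\alpha_i$ with $i<t$ exceeds $\ell(\gamma^{(i-1)})+1$ (in fact by at least $2$), forcing those insertions to be of the ``new pair $(2a-2j,2a-2j-2)$'' type. Without that, the identity $\ell(\gamma^{(t-1)})=\ell(\lambda)+\ell_r(\lambda)-2$, on which everything else rests, is unsupported.

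Second, your converse direction misdescribes the inverse algorithm on $\mathcal{D}_{\le2}(n)$. Having all $m_j=0$ only means no columns (small pairs) are stripped; it does not make $\mu$ a splitting partition, and the algorithm extracts a pair for \emph{every} block $(\mu_{2j-1},\mu_{2j})$ with both entries even (such a block has $\mu_{2j-1}+\mu_{2j}\equiv2\Mod{4}$ and so violates the splitting condition), not only for a trailing $(2,0)$. For instance $\mu=(14,12,6,4)\in\mathcal{D}_{\le2}(36)$ has no $(2,0)$ pair yet is not splitting: the inverse extracts $11^2$ and then $7^2$, giving $\lambda=(11^2,7^2)$. So ``extracts exactly one pair $\dots$ and none otherwise, leaving a splitting partition'' is false, and the real content of this direction --- showing that every extracted repeated part is at least $\ell(\lambda)+\ell_r(\lambda)-1$, which the paper gets by induction together with the observation that a final pair $(2,0)$ forces $2m-1=\ell(\mu)=\ell(\lambda)+\ell_r(\lambda)-1$ --- is missing from your write-up. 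Both gaps are repairable along lines you already gesture at, but they are precisely where the work of the proposition lies, so as written the proposal does not constitute a proof.
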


\section{Acknowledgement}
Igor Pak told me about the {ytabelau} package for drawing Young diagrams in LaTeX. C. Bessenrodt sent me some useful comments on an earlier draft.

\end{document}